\documentclass[fleqn, 11pt]{amsart}
\usepackage{amsmath, amssymb, amsthm, graphicx}
\usepackage{url}
\usepackage{enumerate}

\urlstyle{sf}
\newtheorem{theorem}{Theorem}[section]
\newtheorem{lemma}[theorem]{Lemma}

\theoremstyle{definition}
\newtheorem{definition}[theorem]{Definition}
\newtheorem{remark}[theorem]{Remark}
\numberwithin{equation}{section}

\author{Pieter Tibboel}
\address{
Department of Mathematical Sciences\\
Xi'an Jiaotong-Liverpool University\\
Suzhou, China}
\email{Pieter.Tibboel@xjtlu.edu.cn}

\keywords{$n$-body problems, curved $n$-body problem, celestial mechanics}
\subjclass{Primary 70F10}

\title[Polygonal negative hyperbolic rotopulsators]{Polygonal negative hyperbolic rotopulsators of the curved $n$-body problem. }

\begin{document}
\begin{abstract}
  For the $n$-body problem in spaces of negative constant Gaussian curvature, we prove for a class of negative hyperbolic rotopulsators that if that class exists, the configurations of the point masses of these rotopulsators have to be regular polygons if the rotopulsators are not relative equilibria. Additionally, we prove that if the rotopulsators are relative equilibria, there exists at most one such solution.
\end{abstract}
\maketitle

\section{Introduction}
  By $n$-body problems we mean problems where we are to determine the dynamics of a number of $n$ point masses as dictated by a system of ordinary differential equations. The $n$-body problem in spaces of constant Gaussian curvature, or curved $n$-body problem for short, is described by a system of differential equations (see (\ref{EquationsOfMotion Curved})) that generalises the classical, or Newtonian $n$-body problem to spaces of constant Gaussian curvature. Solutions to an $n$-body problem where the point masses describe a configuration that maintains the same shape and size over time are called relative equilibria. A rotopulsator, or rotopulsating orbit is a solution of the curved $n$-body problem for which the shape of the configuration of the point masses stays the same over time, but the size may change. An important reason to study the curved $n$-body problem, and relative equilibria and rotopulsators in particular, is to identify orbits that are unique to a particular space (see \cite{DK}). For example: Diacu, P\'erez-Chavela and Santoprete (see \cite{DPS1}, \cite{DPS2}) showed that rotopulsators (called homographic orbits in those papers) that have an equilateral triangle configuration and unequal masses, only exist in spaces of zero curvature. As the Sun, Jupiter and the Trojan asteroids form the vertices of an equilateral triangle, the region between these three objects likely has zero curvature. Rotopulsators were first introduced in \cite{DK}, where it was proven that there are five different types of rotopulsators, two for the positive curvature case (spheres) and three for the negative curvature case (hyperboloids). For subclasses of these five different types it was proven in \cite{DT} for $n=4$ that if the rotopulsators have rectangular configurations, they have to be squares. For two of these five subclasses it was proven in \cite{T2} for general $n$ that if the configuration forms a polygon and the rotopulsator is not a relative equilibrium, that polygon has to be a regular polygon. A logical avenue of research then is to investigate to what extent we can generalise the remaining three results from \cite{DT}. In this paper we will generalise the results for two of these three remaining classes, the so-called negative hyperbolic polygonal rotopulsators and negative elliptic hyperbolic polygonal rotopulsators (see \cite{DK}, \cite{DT} and Definition~\ref{Definition negative hyperbolic}). Specifically, we will prove the following result:
  \begin{theorem}\label{Main Theorem 1}
    Let $q_{1},...,q_{n}$ be a negative hyperbolic, or negative elliptic hyperbolic polygonal rotopulsator, as in Definition~\ref{Definition negative hyperbolic}. If $\rho$ is not constant, then the $q_{i}$, $i\in\{1,...,n\}$ are the vertices of a regular polygon. If $\rho$ is constant, then the $q_{i}$, $i\in\{1,...,n\}$ are the vertices of a unique polygonal relative equilibrium.
  \end{theorem}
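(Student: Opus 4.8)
The plan is to follow the scheme of \cite{T2}, adapted to the negative hyperbolic kernels and extended to cover the relative-equilibrium case. First I would insert the explicit coordinates of a negative hyperbolic, respectively negative elliptic hyperbolic, polygonal rotopulsator from Definition~\ref{Definition negative hyperbolic} into the equations of motion (\ref{EquationsOfMotion Curved}). Because in a polygonal configuration every point mass is obtained from a fixed ``base'' vector by the same size function $\rho(t)$ and the same (hyperbolic, resp. hyperbolic--elliptic) rotation, only the constant angular labels $\alpha_{1},\dots,\alpha_{n}$ distinguish the bodies, and the second order system collapses to a finite list of scalar identities, one coordinate direction at a time. These identities are of two kinds: a handful of \emph{dynamical} equations tying together $\ddot\rho,\dot\rho,\rho$ and the rotation frequency with the weighted sums $\sum_{j\ne i} m_{j}\,K(\alpha_{i}-\alpha_{j})$, and a family of \emph{compatibility} equations forcing the interaction terms in the remaining directions to respect the rigid shape. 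The inner products $q_{i}\odot q_{j}$ that enter the force law depend only on $\alpha_{i}-\alpha_{j}$ and on $\rho$, which is precisely what makes this reduction possible.

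The core of the argument is the analysis of the compatibility equations. For fixed $i$, each one has the form $\sum_{j\ne i} m_{j} K_{\ell}(\alpha_{i}-\alpha_{j}) = B_{\ell}(t)\,C_{\ell}(\alpha_{i})$, where the left side is independent of $t$ while $B_{\ell}(t)$ is a combination of $\rho,\dot\rho,\ddot\rho$ and the rotation data that does not depend on $i$. If $C_{\ell}(\alpha_{i})\ne 0$ this forces $B_{\ell}$ to be constant in $t$; running this for the several $\ell$ simultaneously and using that $\rho$ is \emph{not} constant, one finds that the only way all these combinations can be constant at once is if certain configuration-dependent coefficients vanish, which turns the whole family into a closed algebraic system in the masses $m_{j}$ and the angular gaps $\alpha_{i}-\alpha_{j}$, decoupled from the time evolution. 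Extracting a regular polygon from this algebraic system is the main obstacle. As in \cite{T2} I would sum the relations over $i$ and use the parity of the kernels $K_{\ell}$ (odd or even in their argument) to annihilate the resulting double sum, then feed the outcome back into the individual relations; an equivalent route is to single out an index at which an auxiliary weighted sum is extremal and argue that equality in the relevant estimate forces all the gaps, and all the masses, to coincide. The two solution types are handled in parallel: the negative elliptic hyperbolic case typically carries an extra hyperbolic factor in the kernels $K_{\ell}$, but the algebraic skeleton, and hence the conclusion, is the same.

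Finally, suppose $\rho$ is constant. Then $\ddot\rho=\dot\rho=0$, the solution is a relative equilibrium, the rotation angle is affine in $t$, and the compatibility equations no longer split into as many independent pieces, so the regular-polygon argument cannot be repeated; instead one shows directly that at most one such configuration exists. With $\rho$ fixed the dynamical equations reduce to a single transcendental equation of the shape $\Psi(\rho)=0$ whose coefficients are the (now constant) interaction sums determined by the $\alpha_{i}$ and $m_{i}$ together with the rotation frequency. I would then prove that $\Psi$ is strictly monotone on the admissible range of $\rho$ by differentiating and checking that $\Psi'$ keeps a fixed sign; this is the delicate point, because the hyperbolic kernels contribute terms of competing signs that must be controlled uniformly. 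Strict monotonicity pins down $\rho$, and with it the whole orbit up to the obvious symmetries, which gives the asserted uniqueness of the polygonal relative equilibrium in this class.
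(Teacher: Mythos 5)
Your proposal misses the single idea the paper's proof actually turns on. The argument is not a rerun of the \cite{T2} scheme on the full four\-dimensional configuration; it is a reduction \emph{to} \cite{T2}. Concretely: one first proves (Lemma~\ref{Lemmaaaaa}, via the wedge-product integral of \cite{D3}) that $2\rho'\phi'+\rho\phi''=0$. Then, writing out the equations of motion for the third and fourth coordinates and multiplying by $S(\phi+\beta_i)^{-1}$, the off-diagonal component yields
\begin{align*}
0=2\rho'\phi'+\rho\phi''=\sum\limits_{j\neq i}\frac{m_{j}\rho\sinh(\beta_{j}-\beta_{i})}{((q_{i}\odot q_{j})^{2}-1)^{3/2}}.
\end{align*}
Choosing $i$ so that $\beta_{i}$ is minimal makes every summand nonnegative, hence every summand zero, hence all $\beta_{j}$ equal. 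That collapses $q_{i3},q_{i4}$ to be independent of $i$, so the polygon lies in the $(x,y)$-plane on a circle of radius $\sqrt{\rho^{2}-1}$, and the theorem follows by \emph{quoting} Theorem~1.1 of \cite{T2} (non-constant $\rho$) and Theorem~1.2 of \cite{T5} (uniqueness when $\rho$ is constant). Your extremal argument is aimed at the wrong objects: you apply it to the angular gaps and the masses (and the theorem does not assert equal masses), whereas the paper applies it to the hyperbolic phases $\beta_{i}$, and only after invoking the angular-momentum identity, which your outline never uses.

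Without that reduction your scheme cannot even be set up. For a negative hyperbolic (as opposed to negative elliptic-hyperbolic) rotopulsator, Definition~\ref{Definition negative hyperbolic} gives no angular labels $\alpha_{i}$ at all: $x_{i},y_{i}$ are arbitrary twice differentiable functions subject only to $x_{i}^{2}+y_{i}^{2}=\rho^{2}-1$, so there is no family of ``compatibility equations in $\alpha_{i}-\alpha_{j}$'' to analyse, and $q_{i}\odot q_{j}$ depends on $\beta_{i}-\beta_{j}$ and on $x_{i}x_{j}+y_{i}y_{j}$, not merely on $\rho$ and angular gaps. Separately, your treatment of the constant-$\rho$ case (monotonicity of a transcendental $\Psi(\rho)$) is exactly the part you flag as delicate and do not carry out; the paper sidesteps it entirely by citing the finiteness/uniqueness result of \cite{T5}. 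As written, the proposal therefore has a genuine gap at its core step and an unproved claim at its final step.
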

  \begin{remark}
    Because of the proof of Theorem~\ref{Main Theorem 1}, the rotopulsators in this paper are in fact the rotopulsators that were investigated in \cite{T2}. It was proven in \cite{T2} and in \cite{D2} (in the latter paper they were called 'homographic orbits' instead of 'rotopulsators') that these rotopulsators exist.
  \end{remark}
  \begin{remark}
    In \cite{DT}, in Theorem~6 and Theorem~7, it was stated that for \\$n=4$ rectangular negative hyperbolic rotopulsators and rectangular negative elliptic hyperbolic rotopulsators do not exist. These statements are not in conflict with Theorem~\ref{Main Theorem 1}, as the third and fourth coordinates of the point masses of the negative hyperbolic rotopulsators in \cite{DT} are constructed to be coordinates of distinct points on a hyperbola, while we do not impose that restriction in this paper.
  \end{remark}
  The curved $n$-body problem goes back as far as the 1830s with Bolyai and Lobachevsky, who independently proposed a curved 2-body problem in hyperbolic space (see \cite{BB} and \cite{Lo}). While of significant interest to great mathematicians such as Dirichlet, Schering (see \cite{S1}, \cite{S2}), Killing (see \cite{K1}, \cite{K2}, \cite{K3}), Liebmann (see \cite{L1}, \cite{L2}, \cite{L3}), Kozlov, Harin (see \cite{KH}), Cari\~nena, Ra\~nada and Santander (see \cite{CRS}), a working model for the $n\geq 2$ case was not found until 2008 by Diacu, P\'erez-Chavela and Santoprete (see \cite{DPS1}, \cite{DPS2} and \cite{DPS3}). This breakthrough then gave rise to further results for the $n\geq 2$ case in \cite{D1}--\cite{D5}, \cite{D7}, \cite{DK}, \cite{DPo}, \cite{DT}, \cite{T}--\cite{T4} and \cite{Z1}.

  The remainder of this paper is constructed as follows: In section~\ref{Background theory} we will discuss needed background theory, after which we will prove Theorem~\ref{Main Theorem 1} in section~\ref{Section proof of main theorem 1}.

%
  \section{Background theory}\label{Background theory}
  \begin{definition}
    Let $\sigma=\pm 1$. The $n$-body problem in spaces of constant Gaussian curvature is the problem of finding the dynamics of point masses \begin{align*}q_{1},...,\textrm{ }q_{n}\in\mathbb{M}_{\sigma}^{3}=\{(x_{1},x_{2},x_{3},x_{4})\in\mathbb{R}^{4}|x_{1}^{2}+x_{2}^{2}+x_{3}^{2}+\sigma x_{4}^{2}=\sigma\},\end{align*} with respective masses $m_{1}>0$,..., $m_{n}>0$, determined by the system of differential equations
  \begin{align}\label{EquationsOfMotion Curved}
   \ddot{q}_{i}=\sum\limits_{j=1,\textrm{ }j\neq i}^{n}\frac{m_{j}(q_{j}-\sigma(q_{i}\odot q_{j})q_{i})}{(\sigma -\sigma(q_{i}\odot q_{j})^{2})^{\frac{3}{2}}}-\sigma(\dot{q}_{i}\odot\dot{q}_{i})q_{i},\textrm{ }i\in\{1,...,\textrm{ }n\},
  \end{align}
  where for $x$, $y\in\mathbb{M}_{\sigma}^{3}$  the product $\cdot\odot\cdot$ is defined as
  \begin{align*}
    x\odot y=x_{1}y_{1}+x_{2}y_{2}+x_{3}y_{3}+\sigma x_{4}y_{4}.
  \end{align*}
  \end{definition}
  Next, we will define negative hyperbolic polygonal rotopulsators and negative elliptic-hyperbolic polygonal rotopulsators:
  Let \begin{align*}
    T(x)=\begin{pmatrix}
    \cos{x} & -\sin{x} \\
    \sin{x} & \cos{x}
  \end{pmatrix}\textrm{ and }S(x)=\begin{pmatrix}
    \cosh{x} & \sinh{x} \\
    \sinh{x} & \cosh{x}
  \end{pmatrix}\end{align*} be $2\times 2$ matrices. Then
  \begin{definition}\label{Definition negative hyperbolic}
    If $\sigma=-1$ and there exist scalar, twice differentiable functions $x_{i}$, $y_{i}$, $i\in\{1,...,n\}$, $\phi$, $\rho\geq 0$, for which $x_{i}^{2}+y_{i}^{2}-\rho^{2}=-1$ and there exist constants  $\beta_{1},...,\beta_{n}\in\mathbb{R}$, such that for a solution $q_{1}$,...,$q_{n}$ of (\ref{EquationsOfMotion Curved}) we have that
    \begin{align}\label{q_i negative hyperbolic}
      q_{i}(t)=\begin{pmatrix}
        x_{i}(t) \\
        y_{i}t \\
        \rho(t)S(\phi(t))\begin{pmatrix}
          \sinh{(\beta_{i})}\\
          \cosh{(\beta_{i})}
        \end{pmatrix}
      \end{pmatrix},\textrm{ }i\in\{1,...,n\},
    \end{align}
    and the $q_{1}$,...,$q_{n}$ lie on a polygon, then we call $q_{1}$,...,$q_{n}$ a \textit{negative hyperbolic polygonal rotopulsator}. If there exist twice differentiable functions $\theta$, $r\geq 0$, $r^{2}-\rho^{2}=-1$ and constants $\alpha_{1},...,\alpha_{n}\in[0,2\pi]$, such that \begin{align*}
      \begin{pmatrix}
        x_{i}(t)\\
        y_{i}(t)
      \end{pmatrix}=T(\theta(t))\begin{pmatrix}
        \sin{(\alpha_{i})}\\
          \cos{(\alpha_{i})}
      \end{pmatrix}
    \end{align*}
    then we call $q_{1}$,...,$q_{n}$ a \textit{negative elliptic-hyperbolic polygonal rotopulsator}.
  \end{definition}
  We will need the following lemma, which was proven in a more general setting in \cite{DK}, but as the proof for our particular case is not particularly long, we give a proof here as well:
  \begin{lemma}\label{Lemmaaaaa}
    If $q_{1}$,...,$q_{n}$ is a negative hyperbolic polygonal rotopulsator, or a negative elliptic-hyperbolic polygonal rotopulsator with functions $\rho$ and $\phi$ as in Definition~\ref{Definition negative hyperbolic}, then $2\rho'\phi'+\rho\phi''=0$.
  \end{lemma}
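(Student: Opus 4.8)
The plan is to work coordinate-wise with (\ref{EquationsOfMotion Curved}) in the case $\sigma=-1$, using only the third and fourth coordinates of the $q_i$. First I would record that
\[
S(\phi)\begin{pmatrix}\sinh\beta_i\\ \cosh\beta_i\end{pmatrix}=\begin{pmatrix}\sinh(\phi+\beta_i)\\ \cosh(\phi+\beta_i)\end{pmatrix},
\]
so, abbreviating $u_i:=\phi+\beta_i$, the last two coordinates of $q_i$ are $\rho\sinh u_i$ and $\rho\cosh u_i$; in both cases of Definition~\ref{Definition negative hyperbolic} the first two coordinates are some functions of $t$ whose precise form will play no role. Since $\sigma=-1$ and $\sinh a\sinh b-\cosh a\cosh b=-\cosh(a-b)$, one gets
\[
q_i\odot q_j=x_ix_j+y_iy_j-\rho^{2}\cosh(\beta_i-\beta_j)=:c_{ij},
\]
which is symmetric, $c_{ij}=c_{ji}$, and independent of $\phi$; likewise $\dot q_i\odot\dot q_i$ is just some scalar function of $t$.

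The heart of the argument is a single well-chosen linear combination of two components of (\ref{EquationsOfMotion Curved}). Differentiating $\rho\sinh u_i$ and $\rho\cosh u_i$ twice (with $\dot u_i=\phi'$) and substituting into the third and fourth components of (\ref{EquationsOfMotion Curved}), I would multiply the third equation by $\cosh u_i$, the fourth by $\sinh u_i$, and subtract. On the left the $\rho''$ and $\rho(\phi')^{2}$ contributions cancel because $\sinh u_i\cosh u_i-\cosh u_i\sinh u_i=0$, while the $2\rho'\phi'$ and $\rho\phi''$ contributions survive with coefficient $\cosh^{2}u_i-\sinh^{2}u_i=1$; thus the left-hand side equals exactly $2\rho'\phi'+\rho\phi''$. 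On the right, every term carrying the third or fourth coordinate of $q_i$ itself --- that is, the $(q_i\odot q_j)q_i$ pieces and the $(\dot q_i\odot\dot q_i)q_i$ piece --- is annihilated, since $\cosh u_i\cdot\rho\sinh u_i-\sinh u_i\cdot\rho\cosh u_i=0$, while the $q_j$ pieces combine via $\cosh u_i\sinh u_j-\sinh u_i\cosh u_j=\sinh(u_j-u_i)=\sinh(\beta_j-\beta_i)$. Hence for every $i$,
\[
2\rho'\phi'+\rho\phi''=\rho\sum_{j\neq i}\frac{m_j\sinh(\beta_j-\beta_i)}{\bigl(c_{ij}^{2}-1\bigr)^{3/2}}.
\]

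To finish, I would multiply this identity by $m_i$ and sum over $i\in\{1,\dots,n\}$. Since $c_{ij}=c_{ji}$ and $\sinh$ is odd, the $(i,j)$ and $(j,i)$ contributions to the resulting double sum cancel in pairs, so the right-hand side is $0$; as $\sum_{i}m_i>0$, this forces $2\rho'\phi'+\rho\phi''=0$, which is the claim. I do not expect a genuine obstacle --- the computation is entirely mechanical --- but the step that needs care is checking that the $\rho''$-terms really drop out of the chosen combination; they do precisely because the third and fourth coordinates of $q_i$ share the common factor $\rho$ and differ only by interchanging $\sinh u_i$ and $\cosh u_i$, so that $\cosh u_i\cdot(\text{third})-\sinh u_i\cdot(\text{fourth})$ behaves like a hyperbolic derivative annihilating the radial part. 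Note that the polygonal hypothesis is not used: this is a structural identity valid for all negative hyperbolic and negative elliptic-hyperbolic rotopulsators.
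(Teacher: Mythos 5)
Your proof is correct, and it reaches the conclusion by a genuinely different (more self-contained) route than the paper. The paper does not touch the right-hand side of (\ref{EquationsOfMotion Curved}) at all in the proof of this lemma: it quotes from \cite{D3} the integral $\sum_{i}m_{i}q_{i}\wedge\ddot q_{i}=\mathbf{0}$, reads off its $e_{3}\wedge e_{4}$ component $\sum_{i}m_{i}(q_{i3}\ddot q_{i4}-q_{i4}\ddot q_{i3})=0$, and then performs essentially the same determinant computation you do on the left-hand side (your combination $\cosh u_{i}\cdot(\text{third})-\sinh u_{i}\cdot(\text{fourth})$ is exactly $\rho^{-1}(q_{i4}\ddot q_{i3}-q_{i3}\ddot q_{i4})$, and the cancellation of the $\rho''$ and $\rho(\phi')^{2}$ terms is the paper's ``two identical rows'' observation). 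You instead substitute into the equations of motion, obtain the per-body identity
\begin{align*}
2\rho'\phi'+\rho\phi''=\rho\sum_{j\neq i}\frac{m_{j}\sinh(\beta_{j}-\beta_{i})}{\bigl((q_{i}\odot q_{j})^{2}-1\bigr)^{3/2}},
\end{align*}
and kill the right-hand side by the mass-weighted sum over $i$ together with the antisymmetry of $\sinh(\beta_{j}-\beta_{i})$ and the symmetry of $q_{i}\odot q_{j}$. That double-sum cancellation is of course precisely how the angular-momentum integral is proved in the first place, so in spirit the two arguments coincide; what yours buys is independence from the citation of \cite{D3}, plus the bonus that the displayed identity is exactly the relation the paper has to re-derive separately in Section~\ref{Section proof of main theorem 1} (just before (\ref{Identity fourth coordinate})) to prove the main theorem. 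What the paper's version buys is modularity: the conservation law holds for all solutions of (\ref{EquationsOfMotion Curved}), so the lemma follows without ever expanding the force terms. Your remark that the polygonal hypothesis is irrelevant is also consistent with the paper, which notes the lemma was proved in \cite{DK} in a more general setting.
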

  \begin{proof}
   Using the wedge product, it was proven in \cite{D3} that
   \begin{align*}
     \sum\limits_{i=1}^{n}m_{i}q_{i}\wedge\ddot{q}_{i}=\mathbf{0},
   \end{align*}
   where $\mathbf{0}$ is the zero bivector.
   If $e_{1}$, $e_{2}$, $e_{3}$ and $e_{4}$ are the standard basis vectors in $\mathbb{R}^{4}$, then
   \begin{align}\label{Wedge q}
     0e_{3}\wedge e_{4}&=\sum\limits_{i=1}^{n}m_{i}(q_{i3}\ddot{q}_{i4}-q_{i4}\ddot{q}_{i3})e_{3}\wedge e_{4}.
   \end{align}
   As $q_{i3}=\rho\cosh{(\beta_{i}+\phi)}$ and $q_{i4}=\rho\sinh{(\beta_{i}+\phi)}$ by Definition~\ref{Definition negative hyperbolic}, we have that
   \begin{align}\label{Wedge q2}
     & q_{i3}\ddot{q}_{i4}-q_{i4}\ddot{q}_{i3}=\det{\begin{pmatrix}
       q_{i3} & \ddot{q}_{i3} \\
       q_{i4} & \ddot{q}_{i4}
     \end{pmatrix}}=\det{\begin{pmatrix}
       \rho\begin{pmatrix}
         \cosh{(\beta_{i}+\phi)} \\
         \sinh{(\beta_{i}+\phi)}
       \end{pmatrix} & \left(\rho\begin{pmatrix}
         \cosh{(\beta_{i}+\phi)} \\
         \sinh{(\beta_{i}+\phi)}
       \end{pmatrix}\right)''
     \end{pmatrix}}.
   \end{align}
   Because
   \begin{align*}
     \left(\rho\begin{pmatrix}
         \cosh{(\beta_{i}+\phi)} \\
         \sinh{(\beta_{i}+\phi)}
       \end{pmatrix}\right)''&=\rho''\begin{pmatrix}
         \cosh{(\beta_{i}+\phi)} \\
         \sinh{(\beta_{i}+\phi)}
       \end{pmatrix}+2\rho'\phi'\begin{pmatrix}
         \sinh{(\beta_{i}+\phi)} \\
         \cosh{(\beta_{i}+\phi)}
       \end{pmatrix}\\
       &+\rho\phi''\begin{pmatrix}
         \sinh{(\beta_{i}+\phi)} \\
         \cosh{(\beta_{i}+\phi)}
       \end{pmatrix}+\rho(\phi')^{2}\begin{pmatrix}
         \cosh{(\beta_{i}+\phi)} \\
         \sinh{(\beta_{i}+\phi)}
       \end{pmatrix},
   \end{align*}
   using that the determinant of a matrix with two identical rows is zero, we can rewrite (\ref{Wedge q2}) as
   \begin{align*}
     q_{i3}\ddot{q}_{i4}-q_{i4}\ddot{q}_{i3}&=0+\rho\left(2\rho'\phi'+\phi''\right)\det{\begin{pmatrix}
         \cosh{(\beta_{i}+\phi)} & \sinh{(\beta_{i}+\phi)} \\
         \sinh{(\beta_{i}+\phi)} & \cosh{(\beta_{i}+\phi)}
       \end{pmatrix}}\\
       &=\rho\left(2\rho'\phi'+\phi''\right)\cdot(-1).
   \end{align*}
   So combined with (\ref{Wedge q}), we get
   \begin{align*}
     0e_{3}\wedge e_{4}=-\rho\sum\limits_{i=1}^{n}m_{i}(2\rho'\phi'+\rho\phi'')e_{3}\wedge e_{4}=-\rho(2\rho'\phi'+\rho\phi'')\sum\limits_{i=1}^{n}m_{i}e_{3}\wedge e_{4},
   \end{align*}
   giving that indeed $2\rho'\phi'+\rho\phi''=0$.
  \end{proof}
  \section{Proof of Theorem~\ref{Main Theorem 1}}\label{Section proof of main theorem 1}
  Let $q_{1}$,...,$q_{n}$ be a negative hyperbolic polygonal rotopulsator as in Definition~\ref{Definition negative hyperbolic}.
     Let $I$ be the $2\times 2$ identity matrix. Then inserting (\ref{q_i negative hyperbolic}) into (\ref{EquationsOfMotion Curved}) and multiplying both sides of the resulting system of equations for the third and fourth coordinates of $q_{i}$ from the left with $S(\phi+\beta_{i})^{-1}$
    gives, as now $\sigma=-1$,
    \begin{align*}
      &\left(\rho''I+(2\rho'\phi'+\rho\phi'')\begin{pmatrix}
        0 & 1 \\
        1 & 0
      \end{pmatrix}+\rho(\phi')^{2}I-\right)\rho\begin{pmatrix}
        0 \\
        1
      \end{pmatrix}\nonumber\\
      &=\sum\limits_{j=1, j\neq i}^{n}\frac{m_{j}\rho\left(\begin{pmatrix}
        \sinh(\beta_{j}-\beta_{i}) \\
        \cosh(\beta_{j}-\beta_{i})
      \end{pmatrix}+(q_{i}\odot q_{j})\begin{pmatrix}
        0\\
        1
      \end{pmatrix}\right)}{((q_{i}\odot q_{j})^{2}-1)^{\frac{3}{2}}}\\
      &+((x_{i}')^{2}+(y_{i}')^{2}-(\rho')^{2}+\rho^{2}(\phi')^{2})\rho\begin{pmatrix}
        0 \\
        1
      \end{pmatrix},
    \end{align*}
    which can be rewritten as
     \begin{align}\label{Second two equations positive positive + positive negative}
      &\left(\rho''I+(2\rho'\phi'+\rho\phi'')\begin{pmatrix}
        0 & 1 \\
        1 & 0
      \end{pmatrix}+\rho(\phi')^{2}I-((x_{i}')^{2}+(y_{i}')^{2}-((\rho')^{2}+\rho^{2}(\phi')^{2}))I\right)\rho\begin{pmatrix}
        0 \\
        1
      \end{pmatrix}\nonumber\\
      &=\sum\limits_{j=1, j\neq i}^{n}\frac{m_{j}\rho\left(\begin{pmatrix}
        \sinh(\beta_{j}-\beta_{i}) \\
        \cosh(\beta_{j}-\beta_{i})
      \end{pmatrix}+(q_{i}\odot q_{j})\begin{pmatrix}
        0\\
        1
      \end{pmatrix}\right)}{((q_{i}\odot q_{j})^{2}-1)^{\frac{3}{2}}}.
    \end{align}
    Collecting terms for the first coordinate on both sides of (\ref{Second two equations positive positive + positive negative}) gives
    \begin{align*}
      2\rho'\phi'+\rho\phi''=\sum\limits_{j=1, j\neq i}^{n}\frac{m_{j}\rho\sinh(\beta_{j}-\beta_{i})}{((q_{i}\odot q_{j})^{2}-1)^{\frac{3}{2}}}.
    \end{align*}
    By Lemma~\ref{Lemmaaaaa}, $2\rho'\phi'+\rho\phi''=0$, so
    \begin{align}\label{Identity fourth coordinate}
      0=\sum\limits_{j=1, j\neq i}^{n}\frac{m_{j}\rho\sinh(\beta_{j}-\beta_{i})}{((q_{i}\odot q_{j})^{2}-1)^{\frac{3}{2}}}.
    \end{align}
    Now let $\beta_{1}=\min\{\beta_{j}|j\in\{1,...,n\}\}$. Then $\sinh(\beta_{j}-\beta_{1})\geq 0$ and $\sinh(\beta_{j}-\beta_{1})=0$ if and only if $\beta_{j}=\beta_{1}$, so as $(\sigma-\sigma(q_{i}\odot q_{j})^{2})^{\frac{3}{2}}>0$, for (\ref{Identity fourth coordinate}) to hold, all $\beta_{j}$ have to be equal to $\beta_{1}$. This means that $q_{i3}$ and $q_{i4}$ are independent of $i$. Therefore, as the $q_{i}$ are vertices of a polygon, we have that $q_{i1}$ and $q_{i2}$ are coordinates of vertices of a polygon that lie on a circle of radius $r=\sqrt{\rho^{2}-1}$, which means that by Theorem~1.1 of \cite{T2} that if $\rho$ and therefore $r$ is not constant, the $q_{i}$ are the vertices of a regular polygon. If $\rho$ is constant, then $q_{1}$,...,$q_{n}$ is a polygonal relative equilibrium solution and by Theorem~1.2 of \cite{T5} there exists at most one such solution. This completes the proof.

\end{document}